\documentclass[11pt,reqno]{amsart} 
\setlength{\hoffset}{-.5in}
\setlength{\voffset}{.25in}
\usepackage{amssymb,latexsym} 
\usepackage{graphicx}
\usepackage{fancyhdr,amssymb}
\usepackage{url}
\usepackage{cite}		
\usepackage{enumitem} 
\usepackage{float}
\restylefloat{table}
\usepackage{enumitem}
\usepackage{setspace}

\textwidth=6.175in
\textheight=8.5in

\newcommand{\Z}{{\mathbb Z}}

\theoremstyle{plain}
\numberwithin{equation}{section}
\newtheorem{thm}{Theorem}[section]
\newtheorem{theorem}[thm]{Theorem}

\newtheorem{definition}[thm]{Definition}
\newtheorem{proposition}[thm]{Proposition}

\newtheorem{comment}[thm]{Comment}

\pagestyle{fancy}

\begin{document}
\fancyhead{}
\renewcommand{\headrulewidth}{0pt}
\fancyfoot{}
\fancyfoot[LE,RO]{\medskip \thepage}
\fancyfoot[LO]{\medskip MONTH YEAR}
\fancyfoot[RE]{\medskip VOLUME , NUMBER }

\setcounter{page}{1}

\title[A Method for Uniformly Proving a Family of Identities] 
{A Method for Uniformly Proving a Family of Identities}
\author{Russell Jay Hendel}
\address{Department of Mathematics, Towson University}
\email{rhendel@towson.edu}
\thanks{The author acknowledges the careful reading of
an anonymous referee which significantly contributed to the
organizational clarity of the paper.}
 
\begin{abstract} 
This paper presents both a proof method  and a result. \
The proof method presented
is particularly suitable for uniformly proving families of identities
satisfied by a family of recursive sequences. To illustrate
the method, we study the family of recursive sequences
$F^{(k)}_n = \sum_{i=1}^k F^{(k)}_{n-i}, n \ge 0, k \ge 2,$
with $n$ a parameter varying over integers, and $k$ a parameter
indexing members of the family. 
The main theorem states  
$ F^{(k)}_n = \sum_{j=1}^k P_{k,j} F^{(k)}_{n-jk},$
with $P$ a recursive triangle satisfying  
the triangle recursion $P_{i,j}=2P_{i-1,j}-
P_{i-1,j-1},$ with appropriate initial conditions.
The proof of the theorem  exploits the fact that
characteristic polynomials of identities are divisible by the
characteristic polynomial of the recursion generating the underlying sequence. 
\end{abstract}

\maketitle

\section{Introduction and Main Result}

This paper presents both a proof method and result. The proof
method presented, in contrast to other familiar proof methods
for identities, including the Binet form, generating functions,
and matrix methods, is particularly suitable for proving a family
of identities satisfied by a family of recursive sequences. 

To motivate the method we first state the result proven in
this paper which presents a
family of identities satisfied by  the following family of recursive sequences indexed
by a parameter $k \ge 2.$  
\begin{equation}\label{kacci}
	F^{(k)}_n = \sum_{i=1}^k F^{(k)}_{n-i}.
\end{equation}
with $n$ a parameter varying over all integers.

We motivate the main result by considering three examples.
\begin{itemize}
\item
The subsequence of Fibonacci numbers restricted to even indices,
$\{G_n\}_{n \ge 0}=$ \\ 
$\{0,1,3,8,21,55,\dotsc\},$ satisfies the recursion 
$G_n=3G_{n-1}-G_{n-2}.$ Equivalently, $\{F_n\}_{n \ge 0}$ satisfies
$F_n = 3F_{n-2} - F_{n-4}.$ Note that this last identity is satisfied by the 
Fibonacci numbers restricted to either  even or odd indices. 
\item Similarly, the subsequence of Tribonacci
numbers, as defined by the OEIS, 
\cite[A000073]{OEIS}, restricted to indices divisible by 3,
$\{G_n\}_{n \ge 0}=\{0, 1,  7,  44,  274,  1705,  10609, \dotsc \},$
satisfies the recursion $G_n = 7 G_{n-1} -5 G_{n-2} + G_{n-3}.$
For purposes of this paper, we will deal with the corresponding identity
on the Tribonacci numbers, $F_n^{(3)} = 7 F_{n-3}^{(3)} - 5 F_{n-6}^{(3)} 
+ F_{n-9}^{(3)}.$ As in the
case of the Fibonacci numbers, it is simply a convenience to say that the identity
holds for the subsequence of indices divisible by three; it actually also holds for
any subsequence of indices at a fixed congruence modulo 3. 
\item The subsequence of the Tetranacci numbers,
as defined by the OEIS, \cite[A000078]{OEIS},
 restricted to indices divisible by 4,
$\{G_n\}_{n \ge 0} =$ 
$ \{0,  1,  15,  208,  2872,  39648,  547337, \dotsc \},$ satisfies the recursion,
$G_n = 15G_{n-1} - 17 G_{n-2} + 7 G_{n-3} -G_{n-4},$ or equivalently, the 
Tetranacci numbers satisfy the recursion 
$F_n^{(4)} = 15F_{n-4}^{(4)} - 17 F_{n-8}^{(4)} + 7 F_{n-12}^{(4)} -F_{n-16}^{(4)}.$
 \end{itemize}

These examples motivate defining a triangle $P,$ presented in Table \ref{TP},
whose second, third, and fourth 
rows (starting the count of rows from the zero-th row)
 correspond to the coefficients in the recursions just listed.

\begin{center}
\begin{table}[H]
\begin{tabular}{||c|c c c c c c c||}
\hline \hline \text{Row number-Column Number} &0 &  1 & 2 & 3 & 4 & 5 & 6 \\
\hline
0				  & -1 & 0 & 0 & 0 & 0 & 0 & 0 \\
1                                 & -1 & 1 & 0 & 0 & 0 & 0 & 0   \\
2                                 & -1 & 3 & -1& 0 & 0 & 0 & 0   \\
3                                 & -1 & 7 & -5& 1 & 0 & 0 & 0   \\
4                                 & -1 &15 & -17&7 &-1 & 0 & 0   \\
5				  & -1 & 31 & -49 &31&-9 & 1&  0   \\
6				  & -1 & 63 &-129&111&-49&11& -1   \\
\hline \hline
\end{tabular}
\caption{The recursive triangle $P.$} 
\label{TP}
\end{table}
\end{center} 

To formulate our main result we instead define $P$ recursively as follows.

\begin{definition} \; \\
\begin{equation}\label{2minus1}
P_{i,j} = 2P_{i-1,j} - P_{i-1,j-1}. 
\end{equation}
\end{definition} 

The initial values for $P$ are as given in Table \ref{TP} in row 0 and column 0.
As with any triangle recursion, once initial values are given, the associated
sequence can be extended to a doubly infinite array. This fact will be
useful in the proof.

Having defined $P,$ we can then formulate a   main theorem 
which states that the rows of $P$ give the coefficients
of the recursion satisfied by the subsequence of 
$\{F^{(k)}_n\}_{\{\text{all n}\}}$  restricted
to indices divisible by $k.$

\begin{theorem} For  $k\ge 2,$ (and all $n$) 
\begin{equation}\label{t1}
	F^{(k)}_{n} = \sum_{i=1}^k P_{k,i} F^{(k)}_{n-ki}.
\end{equation}
\end{theorem}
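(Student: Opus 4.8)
The plan is to follow the route flagged in the abstract: turn the identity \eqref{t1} into a polynomial divisibility and verify it. First I would homogenize. Since the initial conditions force $P_{k,0}=-1$, \eqref{t1} is equivalent to $\sum_{i=0}^{k}P_{k,i}F^{(k)}_{n-ki}=0$ for all $n$. Letting $E$ denote the shift $EF_n=F_{n+1}$, replacing $n$ by $n+k^{2}$, and reindexing by $j=k-i$, this asserts that $F^{(k)}$ is annihilated by the operator $\Phi_k(E)$, where
\[
 \Phi_k(X)=\sum_{j=0}^{k}P_{k,k-j}\,X^{kj}
\]
is the characteristic polynomial of the identity. Now $r_k(E)F^{(k)}=0$ is precisely the recursion \eqref{kacci}, valid for all $n$ on the doubly infinite extension; hence every polynomial multiple of $r_k$ annihilates $F^{(k)}$, and (since $r_k$ is irreducible, hence the minimal polynomial, and its ideal contains every characteristic polynomial of an identity, as noted above) these are the only annihilating polynomials. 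Either way, it suffices to prove the divisibility $r_k(X)\mid\Phi_k(X)$. This is where extending $F^{(k)}$ to a doubly infinite sequence is convenient, since it makes $E$ invertible and this reduction clean.

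Next I would put $\Phi_k$ in closed form using the triangle recursion. Let $R_k(Y)=\sum_{j\ge 0}P_{k,j}Y^{j}$ be the row‑$k$ generating polynomial. Applying \eqref{2minus1} for $j\ge 1$ together with the boundary values $P_{k,0}=-1$ and $P_{0,j}=0$ $(j\ge 1)$ gives the first‑order recursion $R_k(Y)=1+(2-Y)R_{k-1}(Y)$, with $R_0(Y)=-1$. Telescoping this and then forming $\Phi_k(X)=X^{k^{2}}R_k(X^{-k})$ (the reciprocal polynomial of row $k$, evaluated at $X^{k}$) yields, after routine algebra,
\[
 \Phi_k(X)=\frac{(2X^{k}-1)^{k}-X^{k(k+1)}}{X^{k}-1}.
\]
The one thing to check is that this quotient is genuinely a polynomial, which is automatic because $\Phi_k$ was a polynomial by construction (or directly: $X^k-1$ has simple roots, each a $k$th root of unity, at which the numerator vanishes).

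Finally I would verify $r_k\mid\Phi_k$ by evaluating at the roots of $r_k$, which are simple (as noted above). The key observation is the one‑line factorization
\[
 X^{k+1}-2X^{k}+1=(X-1)\,r_k(X),
\]
equivalently $2X^{k}-1\equiv X^{k+1}\pmod{r_k(X)}$. Thus if $\alpha$ is any root of $r_k$ then $\alpha^{k+1}=2\alpha^{k}-1$, and also $\alpha^{k}\neq 1$ (otherwise $\alpha^{k+1}=1$ as well, forcing $\alpha=1$, contrary to $r_k(1)=1-k\neq 0$ for $k\ge 2$). Substituting into the numerator of $\Phi_k$ gives $(2\alpha^{k}-1)^{k}-\alpha^{k(k+1)}=\alpha^{k(k+1)}-\alpha^{k(k+1)}=0$, so $\Phi_k(\alpha)=0$. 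Since $r_k$ is monic with $k$ distinct roots, every one of which is a root of $\Phi_k$, we conclude $r_k\mid\Phi_k$; retracing the reduction then yields \eqref{t1} for all $n$.

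I do not anticipate a deep obstacle once the characteristic‑polynomial set‑up is in place: the content of the proof is really the single congruence $2X^{k}-1\equiv X^{k+1}\pmod{r_k}$, which makes everything collapse. The step most apt to be fiddly is the bookkeeping that converts the triangle recursion \eqref{2minus1} into the closed form for $\Phi_k$ — keeping the reindexing $j=k-i$, the reciprocal‑and‑substitute $Y\mapsto X^{-k}$, and the telescoping of $R_k(Y)=1+(2-Y)R_{k-1}(Y)$ mutually consistent with the boundary column $P_{k,0}=-1$.
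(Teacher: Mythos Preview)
Your argument is correct. The reduction to divisibility of the characteristic polynomial by $r_k$ is the same starting point the paper uses; thereafter your route diverges from the paper's and is genuinely different.

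The paper never writes down a closed form for $p_k(X^k)$. Instead it \emph{constructs the explicit cofactor}: it introduces the difference triangle $Q_{i,j}=P_{i,j}-P_{i-1,j}$, writes down an explicit polynomial $q_k(X)$ whose coefficients are mixtures of $P$- and $Q$-entries, and then verifies the product $(-r_k)(X)\,q_k(X)=-p_k(X^k)$ by a longhand, column-by-column multiplication organised into a matrix $M$ and split into five cases (A--E). Your proof, by contrast, telescopes the row recursion $R_k(Y)=1+(2-Y)R_{k-1}(Y)$ to obtain the closed form
\[
\Phi_k(X)=\frac{(2X^{k}-1)^{k}-X^{k(k+1)}}{X^{k}-1}\quad(=-p_k(X^k)),
\]
and then checks $r_k\mid\Phi_k$ by evaluating at the simple roots of $r_k$, using the one-line factorisation $(X-1)r_k(X)=X^{k+1}-2X^{k}+1$. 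This is precisely the ``purely polynomial proof'' the paper flags in its literature section (where it records $X^{k^2}r_k(2-X^{-k})=p_k(X^k)$) and explicitly leaves open; your $R_k(Y)=-r_k(2-Y)$ is that identity in disguise. What the paper's approach buys is an explicit description of the quotient $q_k$ in terms of the $P,Q$ triangles and a proof that avoids any appeal to the root structure of $r_k$; what your approach buys is brevity and a single conceptual step (the congruence $2X^k-1\equiv X^{k+1}\pmod{r_k}$) in place of a five-case computation.
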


\section{Proof Method}

This section motivates the need for a separate proof method.
Traditional  proof methods include the Binet form, generating
functions, and matrix methods \cite{Koshy}. If we are proving
an identity satisfied by a single recursive sequence, these methods suffice.

However, we are proving that for each $k,$   \eqref{t1} is 
satisfied by $F^{(k)}.$ We therefore need a \emph{uniform} method to
prove the family of identities. The Binet form for each $F^{(k)}$
is different with different roots. Similar comments apply to generating functions and matrices. 

The method by which we prove \eqref{t1} exploits the fact 
that the minimal polynomial of 
a recursive sequence generates an ideal containing 
the characteristic polynomials corresponding to 
all identities satisfied by the recursive sequence.
This method will allow uniform proofs.

To make this precise, we first illustrate the flow of logic 
in the proof, by discussing the proof  for $k=2,$
 that the  
subsequence of Fibonacci numbers restricted to even indices,
$\{G_n\}_{n \ge 0}=$   
$\{0,1,3,8,21,55,\dotsc\},$ satisfies the recursion 
\begin{equation}\label{tt1}
	G_n=3G_{n-1}-G_{n-2}.
\end{equation}.

As just formulated this appears to be statement about a sequence
$\{G_n\}_{n \ge 0}.$ We can equivalently prove,
\begin{equation}\label{tt2}
	F_n = 3 F_{n-2} - F_{n-4},
\end{equation}  which is a statement about the Fibonacci numbers.
We simply observe that \eqref{tt2} implies \eqref{tt1}.

\begin{definition} If $H_m = \sum_{i=1}^n c_i H_{m-i}$ is an identity
satisfied by a recursive sequence $\{H_m\}_{m \ge 0},$ then the 
 characteristic polynomial of the identity is
\begin{equation}\label{cp}
	p(X) = X^n - \sum_{i=1}^n c_i X^{n-i}.
\end{equation} 
\end{definition}

Equation \eqref{cp}
 establishes a correspondence between recursive sequences and polynomials.

This definition of characteristic polynomial follows
\cite[Section 6.2]{Rosen}. As noted in \cite{Hendel} other definitions
do exist. Definition 2.1 regards the characteristic polynomial
as an attribute of the identity. Contrastively, the minimal
polynomial is an attribute of the recursive sequence; roughly, it is
the characteristic polynomial with leading coefficient  1 
of smallest degree whose associated recursion is satisfied by the sequence.
The following result is well-known, but  is not frequently used in traditional sources
\cite{Koshy} to prove identities. Webb \cite{Webb1,Webb2} has written
some papers showing the usefulness of such methods. The proof is 
short and also given below.

\begin{proposition} A recursive identity is satisfied by a recursive sequence iff the characteristic polynomial of that
identity is divisible by the minimal polynomial of the underlying
recursion. \end{proposition}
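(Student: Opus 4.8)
The plan is to prove both directions by working in the ring $\Z[X]/(m(X))$, where $m(X)$ denotes the minimal polynomial of the underlying recursion, and by exploiting the standard correspondence between linear recursive sequences and the module structure given by the shift operator. Let $E$ denote the shift operator, $(E H)_n = H_{n+1}$. An identity $H_m = \sum_{i=1}^n c_i H_{m-i}$, with characteristic polynomial $p(X)$ as in \eqref{cp}, holds for all $m$ precisely when $p(E)$ annihilates the sequence $\{H_m\}$; likewise the minimal recursion says $m(E)$ annihilates $\{H_m\}$, and $m$ generates the full annihilator ideal of $\{H_m\}$ in $\Z[X]$ (this is what it means to be the minimal polynomial). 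So the proposition is really the statement that $p(E)$ annihilates the sequence if and only if $m(X) \mid p(X)$, which is immediate from the definition of the annihilator ideal together with the fact that $\Z[X]$ is such that the relevant ideal is principal with generator $m$.

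Concretely, I would carry out the steps in this order. First I would recall (or briefly set up) the shift operator and observe that for any polynomial $q(X) = X^d - \sum_i a_i X^{d-i}$, the operator $q(E)$ applied to $\{H_m\}$ gives the sequence whose $m$-th term is $H_{m+d} - \sum_i a_i H_{m+d-i}$; hence "$q$ is the characteristic polynomial of an identity satisfied by $\{H_m\}$'' is equivalent to "$q(E)$ kills $\{H_m\}$ identically.'' Second, the \emph{if} direction: suppose $m(X) \mid p(X)$, say $p(X) = m(X) q(X)$. Then $p(E) = q(E)\, m(E)$ (operators in $E$ commute since they are polynomials in a single operator), and since $m(E)$ annihilates $\{H_m\}$ by hypothesis, so does $p(E)$; translating back via step one gives the recursive identity with characteristic polynomial $p$. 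Third, the \emph{only if} direction: if the identity with characteristic polynomial $p$ holds, then $p(E)$ annihilates $\{H_m\}$, so $p$ lies in the annihilator ideal of $\{H_m\}$ in $\Z[X]$; since by definition the minimal polynomial $m$ generates this ideal, $m(X) \mid p(X)$.

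The one genuine subtlety — and the step I expect to require the most care in a fully rigorous write-up — is the claim that the annihilator ideal is principal and generated by the minimal polynomial, so that membership in the ideal is literally divisibility by $m$. Over a field $\Q[X]$ this is the elementary fact that a PID's ideals are principal and the monic generator of the annihilator is the minimal polynomial; the minor point is that we want the divisibility over $\Z[X]$, which follows because $m(X)$ is monic (division with remainder by a monic polynomial works over any commutative ring, so if $m \mid p$ in $\Q[X]$ with $m$ monic then already $m \mid p$ in $\Z[X]$). For the sequences $F^{(k)}$ at hand this is exactly the content of the remark after \eqref{rk}: $r_k(X)$ is monic, irreducible over $\Z[X]$, hence the minimal polynomial, and generates the ideal containing all characteristic polynomials of identities. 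I would state the proposition and its proof at this level of generality (monic minimal polynomial over $\Z[X]$) so that it applies verbatim to every member of the family, which is precisely the uniformity the paper is after. The remaining details — commuting the operators, unwinding the index shifts — are routine and I would not belabor them.
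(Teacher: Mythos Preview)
Your proof is correct and follows essentially the same approach as the paper's: interpret $X$ as the shift operator so that an identity holds iff its characteristic polynomial annihilates the sequence, then argue via the annihilator ideal generated by the minimal polynomial. Your treatment is in fact more careful than the paper's on the one subtlety you flag: the paper asserts that all ideals in $\Z[X]$ are principal (which is false---consider $(2,X)$), whereas you correctly note that what is actually needed is monicity of the minimal polynomial, so that division with remainder by $m(X)$ works over $\Z[X]$ and divisibility in $\Q[X]$ already gives divisibility in $\Z[X]$.
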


\begin{proof} Interpret the variable $X$ as the difference operator 
\cite{Boole}. Then the minimal polynomial considered as an operator when applied to the
sequence to which it is a minimal polynomial annihilates that sequence, that is,
the resulting sequence is identically 0. 
It immediately follows by the associativity of polynomials 
that any polynomial multiple of this minimal polynomial 
also annihilates the underlying recursive sequence showing 
that the identity corresponding to this multiple of the minimal polynomial 
is satisfied by the recursive sequence. 
Since in $\Z[x]$ all ideals are generated by a single element, 
it follows that the characteristic polynomial associated with any 
identity must lie in the ideal generated by the minimal polynomial
\end{proof}

It follows that to prove \eqref{tt2} it suffices to show that 
the characteristic polynomial of this identity 
is divisible by the minimal polynomial for the Fibonacci numbers, 
$r(X)= X^2-X-1.$ 

The flow of logic is concisely presented in Table \ref{Tpolynomial}.

\begin{center}
\begin{table}[H]
\begin{tiny}
\begin{tabular}{||p{.15\linewidth} | p{.35\linewidth} || p{.30\linewidth} | p{.20\linewidth}||}
\hline \hline Row number & Item &  Recursive-Identity Formulation &  Characteristic-Polynomial Formulation \\
\hline
1 & Fibonacci Numbers, & $F_n=F_{n-1}+F_{n-2}$ 	& $r(X)=X^2 - X - 1$ \\
2 & Even-indexed Fibonacci numbers  & $F_n = 3F_{n-2} -F_{n-4}$	& $p(X)=X^4 -3X^2 +1$  \\
\hline
3 & The Row-2 recursion is satisfied by the Fibonacci numbers   & &
$p(X)$ is divisible by $r(X)$ \; \\
\hline \hline
\end{tabular}
\caption{ Polynomial formulation of Theorem. } 
\label{Tpolynomial}
\end{tiny}
\end{table}
\end{center} 
To prove that $p(X)$ is divisible by $r(X)$ 
it suffices to show that the quotient $p(X)/r(X)=q(X)$ 
lies in $\Z[x]$ that is, has integer coefficients. 

For the case of general $k,$  
first, we define $p_k(X), k \ge 2,$ as follows.
\begin{equation}\label{pk}
		p_k(X) = X^k - \sum_{i=1}^k P_{k,i} X^{k-i}.
\end{equation}
Then the characteristic polynomial of \eqref{t1} is $p_k(X^k).$
$p_k(X)$ has descending powers of $X$ while $p_k(X^k)$ 
has powers which descend by multiples of $k$ as required. 

By the discussion above, 
to prove \eqref{t1} it suffices to prove that
the \emph{corresponding} characteristic polynomial 
$p_k(X^k)$ is divisible by the minimal polynomial of $F^{(k)},
$ $r_k(X).$ Alternatively, we must show
that $\frac{p_k(X^k)}{r_k(X)}$ lies in $\Z[x].$

We therefore proceed as follows.
\begin{proposition} Define rational functions, $q_k(x), k \ge 2$ by
\begin{equation}\label{qk}
	p_k(X^k) = r_k(X) q_k(X).
\end{equation} 
Then to prove \eqref{t1}, it suffices to prove that $q_k(X) \in \Z[X].$
\end{proposition}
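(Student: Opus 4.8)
The plan is to derive this Proposition almost immediately from the preceding Proposition --- that an identity holds for a recursive sequence exactly when its characteristic polynomial is a multiple of the relevant minimal polynomial --- together with Definition 2.1. There is no substantive computation at this stage; the genuine work, namely verifying that the rational function $q_k(X)$ actually has integer coefficients, is postponed to the later sections and carried out through the triangle recursion \eqref{2minus1}. So the proof here is just a short chain of equivalences and one implication.

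First I would record that every entry $P_{i,j}$ of the triangle $P$ is an integer, which follows from the integer initial data by induction on the row index using \eqref{2minus1}; hence $p_k(X)$ as given by \eqref{pk} lies in $\Z[X]$, and therefore so does $p_k(X^k)$, which is moreover monic of degree $k^2$. Second, I would identify $p_k(X^k)$ as precisely the characteristic polynomial of \eqref{t1} in the sense of Definition 2.1: writing \eqref{t1} in the form $F^{(k)}_n = \sum_{m=1}^{k^2} c_m F^{(k)}_{n-m}$, where $c_m = P_{k,i}$ when $m = ki$ for some $i$ and $c_m = 0$ otherwise, Definition 2.1 attaches to it the polynomial $X^{k^2} - \sum_{m=1}^{k^2} c_m X^{k^2 - m} = X^{k^2} - \sum_{i=1}^{k} P_{k,i}(X^k)^{k-i} = p_k(X^k)$. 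Third, since $r_k(X)$ is the minimal polynomial of $F^{(k)}$, the preceding Proposition now says that \eqref{t1} is satisfied by $F^{(k)}$ if and only if $r_k(X)$ divides $p_k(X^k)$ in $\Z[X]$. Finally, because $r_k(X)$ is monic, the rational function $q_k(X) = p_k(X^k)/r_k(X)$ defined by \eqref{qk} lies in $\Z[X]$ if and only if this divisibility holds: if $q_k(X) \in \Z[X]$, then \eqref{qk} itself displays $p_k(X^k)$ as $r_k(X)$ times an element of $\Z[X]$. Composing these steps, $q_k(X) \in \Z[X]$ forces \eqref{t1}, which is the asserted sufficiency.

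I do not expect a real obstacle within this Proposition: it is a bookkeeping step whose sole function is to recast \eqref{t1} as the single concrete problem of showing that a quotient polynomial has integer coefficients. The two points deserving a moment's care are (i) confirming that $p_k(X^k)$ is literally the polynomial produced from \eqref{t1} by Definition 2.1, so that the preceding Proposition applies verbatim, and (ii) noting that the monicity of $r_k(X)$ eliminates any denominator subtlety in passing between the statements $q_k(X) \in \Z[X]$ and $r_k(X) \mid p_k(X^k)$ in $\Z[X]$. The hard part --- actually establishing $q_k(X) \in \Z[X]$, presumably by tracking the quotient through the recursion $P_{i,j} = 2P_{i-1,j} - P_{i-1,j-1}$ --- belongs entirely to the sections that follow.
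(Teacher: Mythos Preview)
Your argument is correct and follows essentially the same route as the paper: identify $p_k(X^k)$ as the characteristic polynomial of \eqref{t1} via Definition~2.1, invoke the preceding Proposition to reduce \eqref{t1} to divisibility of $p_k(X^k)$ by $r_k(X)$, and then observe that this divisibility is equivalent to $q_k(X)\in\Z[X]$. Your write-up is slightly more careful than the paper's surrounding discussion (you explicitly note the monicity of $r_k$ and the integrality of the $P_{i,j}$), but there is no substantive difference in approach.
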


Actually, in this paper, we prove the equivalent equation,
\begin{equation}\label{qkn}
	-p_k(X^k) = (-r_k(X)) q_k(X).
\end{equation}
This is purely a convenience to avoid excessive minus signs in tables and equations.

To summarize, we will prove \eqref{qkn} which in turn implies
\eqref{qk} which in turn shows $\frac{p_k(X^k)}{r_k(X)}$ lies in $\Z[x],$
 or equivalently that $p_k(X^k)$ is divisible by 
$r_k(X).$ By Proposition 2.2 this implies that 
\eqref{t1}, the identity corresponding to the characteristic polynomial $p_k(X^k),$
is satisfied by $F^{(k)}$ the recursive sequence corresponding 
to the minimal polynomial $r_k(X).$ In other words, proving 
\eqref{qkn} suffices to prove Theorem 1.3.

We close this section explaining where the \emph{work} for the proof lies. 
We must show \emph{for each $k$} that $q_k(X)$ has integral coefficients.  
What we actually do, is inductively describe the coefficients of $q_k(X)$ 
and then show that 
$q_k(X)(-r_k(X))=-p_k(X^k).$

\section{The Coefficients of $q_k(X).$}

 To describe the  coefficients
of $q_k(x)$ we first define a matrix $Q,$ presented in Table \ref{TQ}, which
  is obtained by applying the backward difference operator to $P.$  

\begin{definition} The triangle $Q$ is defined by
\begin{equation}\label{Q}
	Q_{i,j} = P_{i,j}-P_{i-1,j}.		
\end{equation}
\end{definition} 
\begin{center}
\begin{table}[H]
\begin{tabular}{||c|c c c c c c c||}
\hline \hline
$\text{Row number-Column Number}$ & 0	& 1 & 2 & 3 & 4 & 5 & 6 \\
\hline
0				  & 0 & 0 & 0 & 0 & 0 & 0 & 0 \\	
1                                 & 0 & 1 & 0 & 0 & 0 & 0 & 0 \\
2                                 & 0 & 2 & -1& 0 & 0 & 0 & 0 \\
3                                 & 0 & 4 & -4& 1 & 0 & 0 & 0 \\
4                                 & 0 & 8 & -12&6 &-1 & 0 & 0 \\
5		   		  & 0 & 16& -32 &24&-8 & 1&  0 \\
6	                   	  & 0 & 32&-80&80&-40&10& -1 \\
\hline \hline
\end{tabular}
\caption{ The recursive triangle $Q$ defined by Definition \eqref{Q}. }  
\label{TQ}
\end{table}
\end{center} 

In the proof of Theorem 1.3, besides using  \eqref{Q}, we will need
the following two easily proven identities. These three identities allow
us to convert sums in $Q$-elements into $P$-elements.
\begin{subequations}\label{Q2P}
\begin{align}
 \; &		  \sum_{r=c}^t Q_{r,c} = P_{t,c}, &\text{by telescoping and \eqref{Q}},\\
 \; &		 \sum_{r=s}^t Q_{r,c}= P_{t,c} - P_{s-1,c}. 
		 &\text{ by the equation just stated.}     
\end{align}
\end{subequations}


Using \eqref{Q}  we can now completely describe 
the polynomial $q_k(x),$  $k \ge 2.$
\begin{equation}\label{qq}
	q_k(x) = x^{k(k-1)} + 
	\displaystyle \sum_{j=0}^{k-2} x^{kj} 
	\Biggl(-P_{k-1,k-1-j} +
	\displaystyle \sum_{i=k-1-j}^{k-1} Q_{i,k-1-j} x^{k-i} \Biggr).
\end{equation}

As explained in the last section, to prove \eqref{t1} it suffices to show
\eqref{qkn} is true which is what we will spend the rest of the paper doing.

\section{Some Literature}

Before continuing the proof, we take note of certain related items
appearing in the literature.  

\cite[A193845]{OEIS}  presents a triangle similar to $P$ 
with only positive entries. This triangle can be generated   by power series,
polynomials,  or 
using fission and fusion \cite{Kimberling}. 
It is easy to "fix" this OEIS triangle
so that it agrees with $P$ in sign. 

For example, M. F. Hasler in the last comment of the \emph{comment} section
of sequence \cite[A193845]{OEIS}, 
dated Oct. 15, 2014, 
 notes that the $k$-th row of A193845 lists the coefficients of the polynomial 
$\sum_{i=0}^k (X+2)^i,$ in order of increasing powers. 

To modify this for the $P$ triangle in Table \ref{TP}, 
which has negative entries,  we first replace $X+2$ with $2-X.$ 
An example, for $k=3,$ will illustrate the further 
required polynomial manipulations. 
We can verify that 
$(2-X)^3 - (2-X)^2-(2-X)-(2-X)^0 = -X^3+5X^2-7X+1$ which is almost what we want except
that the signs and exponents have to be adjusted. 
First, by \eqref{rk} we can re-write this as 
$r_3(2-X) = (2-X)^3 - (2-X)^2-(2-X)-(2-X)^0 =-X^3+5X^2-7X+1.$ 
To fix up the sign, we 
can, by \eqref{pk}, write $X^3 r_3(2-\frac{1}{X}) = 
-1+ 5X-7X^2+X^3 = p_3(X).$ 
We can similarly write for arbitrary $k,$ 
$X^{k^2} r_k(2-\frac{1}{X^k}) = p_k(X^k).$ 

This gives an alternate definition of triangle $P$ from which the recursion 
\eqref{2minus1} can
be derived. Expositionally, since all we need is the underlying recursion, 
it is simpler to
directly define triangle $P$ by \eqref{2minus1}. 

Similar observations can be made about triangle $Q$ in Table \ref{TQ}. 
A  triangle similar to $Q$ with only positive entries 
is found in \cite[A038207]{OEIS}. N-E. Fahssi, in the \emph{comment} section
of this sequence (with comment date, Apr 13 2008),  
points out that the $k$-th row of A038207 
are the coefficients $(2+X)^k$.

We can adjust this idea to triangle $Q$ 
which has negative entries. The coefficients of $X(2-X)^k$ give 
the coefficients in the $k$-th row of triangle $Q.$
For example, when $k=2,$ $X(2-X)^2 =4X-4X^2+X^3$ 
corresponding to row 3 in 
Table \ref{TQ}.
     
These alternate definitions might provide an
alternate proof of \eqref{t1} using polylnomials. 
A challenge in proving \eqref{qkn} by polynomials or some similar
method such as fusion and fission is that, as seen in \eqref{qq}, the 
coefficients of $q_k(X)$ are mixtures of $Q$ and $P$ triangle entries. 
The proof presented in this paper is both straightforward and computational, and should
have independent interest.
 We therefore
leave the search for a purely polynomial proof of \eqref{qkn} as an open problem.

 \section{ The matrix $M$}

We prove \eqref{qkn} computationally by performing the product long hand. To keep tabs on
calculations we store the information in a matrix $M=M_k.$ Throughout the proof,
Tables \ref{TrqQP14}-\ref{Trqnumber25} presenting $M_5$ facilitate following arguments.

The matrix $M,$ as illustrated in Tables  \ref{TrqQP14} and \ref{TrqQP25} for the 
case $k=5,$
labels columns with $X$ raised to a power. In the sequel we 
will interchangeably refer to the \emph{column with header $X^k$} 
as column $X^k$ or column $k.$ 

The rows of $M$ are labeled with the summands in $-r_k(X),$ \eqref{rk}. Similar,
to the way we refer to columns, in the sequel we will refer, for example, to the
row with header $-X^k$ as either row $k$ or row $X^k$ (leaving out the sign).
This should cause no confusion. 

Using this terminology, we define $M$ as follows:
\begin{small}
\begin{subequations}\label{M}
\begin{align}
 \; & M_{r,c}&= \text{coefficient of $X^c$ in $X^r q_k(X)$}, & 0 \le r \le k-1\\
 \; & \;    &= \text{coefficient of $X^{c-r}$ in $q_k(X)$}, & c \ge r, r \neq k\\
 \; & M_{k,c}&= \text{coefficient of $X^c$ in $-X^k q_k(X)$}, &\; \\
 \; & \;     &= \text{coefficient of $X^{c-k}$ in $-q_k(X)$}, & c \ge k
\end{align}
\end{subequations}
\end{small}
	
 For   $k=5,$
Tables \ref{TrqQP14} and \ref{TrqQP25} present all non-leading
coefficients of $q_k(X)$ 
using $Q$ and $P$ with subscripts, while Tables \ref{Trqnumber14} and \ref{Trqnumber25}  
present the 
associated numerical values. The leading coefficient for the $X^{20}$ column
is simply 1 in both Tables \ref{TrqQP25} and \ref{Trqnumber25}.  Tables \ref{TrqQP14}-\ref{Trqnumber25} have a terminal 
\emph{sum} row indicating in each column the sum of the entries
above it in that column. The top \emph{case}
row is explained in the next section.

\begin{center}
\begin{table}[H]
\begin{tiny}
{\renewcommand{\arraystretch}{1.3}
\begin{tabular}{|||c||c|c|c|c|c||c|c|c|c|c||c|c|c|c|c||}
\hline Case& A&B&B&B&B&C&E&B&B&B&C&E&E&B&B \\
\hline$r-c$&$X^{0}$&$X^{1}$&$X^{2}$&$X^{3}$&$X^{4}$&$X^{5}$&$X^{6}$&$X^{7}$&$X^{8}$&$X^{9}$&$X^{10}$&$X^{11}$&$X^{12}$&$X^{13}$&$X^{14}$\\
\hline$X^0$&$-P_{4,4} $&$Q_{4,4}$&\;&\;&\;&$-P_{4,3} $&$Q_{4,3}$&$Q_{3,3}$&\;&\;&$-P_{4,2} $&$Q_{4,2}$&$Q_{3,2}$&$Q_{2,2}$&$$\\
\hline$X^1$&$ $&$-P_{4,4} $&$Q_{4,4}$&$ $&$ $&$ $&$-P_{4,3} $&$Q_{4,3}$&$Q_{3,3}$&$ $&$ $&$-P_{4,2} $&$Q_{4,2}$&$Q_{3,2}$&$Q_{2,2}$\\
\hline$X^2$&$ $&\;&$-P_{4,4} $&$Q_{4,4}$&$ $&$ $&$ $&$-P_{4,3} $&$Q_{4,3}$&$Q_{3,3}$&$ $&$ $&$-P_{4,2} $&$Q_{4,2}$&$Q_{3,2}$\\
\hline$X^3$&\;&\;&\;&$-P_{4,4} $&$Q_{4,4}$&$ $&$ $&$ $&$-P_{4,3} $&$Q_{4,3}$&$Q_{3,3}$&$ $&$ $&$-P_{4,2} $&$Q_{4,2}$\\
\hline$X^4$&\;&\;&\;&\;&$-P_{4,4} $&$Q_{4,4}$&$ $&$ $&$ $&$-P_{4,3} $&$Q_{4,3}$&$Q_{3,3}$&$ $&$ $&$-P_{4,2} $\\
\hline$-X^5$&\;&\;&\;&\;&$ $&$P_{4,4}$&$-Q_{4,4}$&\;&\;&\;&$P_{4,3}$&$-Q_{4,3}$&$-Q_{3,3}$&\;&$$\\
\hline$SUM$&$P_{5,5}$&\;&\;&\;&\;&$P_{5,4}$&\;&\;&\;&\;&$P_{5,3}$&\;&\;&\;&$$\\
\hline \hline \hline
\end{tabular}}
\end{tiny} 
\caption{Coefficients of of $-r_5(X) \times q_5(X)$ using $Q,P,$ columns 0-14. } 
\label{TrqQP14} 
\end{table}
\end{center}

\begin{center}
\begin{table}[H]
\begin{tiny}
{\renewcommand{\arraystretch}{1.4}
\begin{tabular}{|||c||c|c|c|c|c||c|c|c|c|c||c|c|c|c|c||}
\hline \hline \hline
\hline Case&C&E&E&E&B&D&D&D&D&D&A\\
\hline$row - col$&$X^{15}$&$X^{16}$&$X^{17}$&$X^{18}$&$X^{19}$&$X^{20}$&$X^{21}$&$X^{22}$&$X^{23}$&$X^{24}$&$X^{25}$\\
\hline$X^0$&$-P_{4,1} $&$Q_{4,1}$&$Q_{3,1}$&$Q_{2,1}$&$Q_{1,1}$&$1$&\;&\;&\;&\;&$$\\
\hline$X^1$&$ $&$-P_{4,1} $&$Q_{4,1}$&$Q_{3,1}$&$Q_{2,1}$&$Q_{1,1}$&$1$&$ $&$ $&$ $&$ $\\
\hline$X^2$&$Q_{2,2}$&$ $&$-P_{4,1} $&$Q_{4,1}$&$Q_{3,1}$&$Q_{2,1}$&$Q_{1,1}$&$1$&$ $&$ $&$ $\\
\hline$X^3$&$Q_{3,2}$&$Q_{2,2}$&$ $&$-P_{4,1} $&$Q_{4,1}$&$Q_{3,1}$&$Q_{2,1}$&$Q_{1,1}$&$1$&$ $&$ $\\
\hline$X^4$&$Q_{4,2}$&$Q_{3,2}$&$Q_{2,2}$&$ $&$-P_{4,1} $&$Q_{4,1}$&$Q_{3,1}$&$Q_{2,1}$&$Q_{1,1}$&$1$&$ $\\
\hline$-X^5$&$P_{4,2}$&$-Q_{4,2}$&$-Q_{3,2}$&$-Q_{2,2}$&\;&$P_{4,1}$&$-Q_{4,1}$&$-Q_{3,1}$&$-Q_{2,1}$&$-Q_{1,1}$&$-1$\\
\hline$SUM$&$P_{5,2}$&\;&\;&\;&\;&$P_{5,1}$&\;&\;&\;&\;&$-1$\\
\hline \hline \hline
\end{tabular}}
\end{tiny}
\caption{ Coefficients of $-r_5(X) \times q_5(X)$ using $Q,P,$ columns 15-25. } 
\label{TrqQP25} 
\end{table}
\end{center}

\begin{center}
\begin{table}[H]
\begin{tiny}
{\renewcommand{\arraystretch}{1.4}
\begin{tabular}{|||c||c|c|c|c|c||c|c|c|c|c||c|c|c|c|c||}
\hline \hline
\hline$row - col$&$X^{0}$&$X^{1}$&$X^{2}$&$X^{3}$&$X^{4}$&$X^{5}$&$X^{6}$&$X^{7}$&$X^{8}$&$X^{9}$&$X^{10}$&$X^{11}$&$X^{12}$&$X^{13}$&$X^{14}$\\
\hline$X^0$&$1$&$-1$&\;&\;&\;&$-7$&$6$&$1$&\;&\;&$17$&$-12$&$-4$&$-1$&$$\\
\hline$X^1$&$ $&$1$&$-1$&\;&\;&\;&$-7$&$6$&$1$&$\;$&$\;$&$17$&$-12$&$-4$&$-1$\\
\hline$X^2$&\;&\;&$1$&$-1$&\;&\;&\;&$-7$&$6$&$1$&\;&\;&$17$&$-12$&$-4$\\
\hline$X^3$&\;&\;&\;&$1$&$-1$&\;&\;&\;&$-7$&$6$&$1$&\;&\;&$17$&$-12$\\
\hline$X^4$&\;&\;&\;&\;&$1$&$-1$&\;&\;&\;&$-7$&$6$&$1$&\;&\;&$17$\\
\hline$-X^5$&\;&\;&\;&\;&\;&$-1$&$1$&\;&\;&\;&$7$&$-6$&$-1$&\;&$$\\
\hline$SUM$&$1$&\;&\;&\;&\;&$-9$&\;&\;&\;&\;&$31$&\;&\;&\;&$$\\
\hline \hline \hline
\end{tabular}}
\end{tiny}
\caption{Numerical coefficients of $-r_5(X) \times q_5(X),$ columns 0-14. } 
\label{Trqnumber14}  
\end{table}
\end{center}

\begin{center}
\begin{table}[H]
\begin{tiny}
{\renewcommand{\arraystretch}{1.4}
\begin{tabular}{|||c||c|c|c|c|c||c|c|c|c|c||c|c|c|c|c||}
\hline \hline \hline
\hline$row - col$&$X^{15}$&$X^{16}$&$X^{17}$&$X^{18}$&$X^{19}$&$X^{20}$&$X^{21}$&$X^{22}$&$X^{23}$&$X^{24}$&$X^{25}$\\
\hline$X^0$&$-15$&$8$&$4$&$2$&$1$&$1$&\;&\;&\;&\;&$$\\
\hline$X^1$&$ $&$-15$&$8$&$4$&$2$&$1$&$1$&\;&\;&\;&$$\\
\hline$X^2$&$-1$&$ $&$-15$&$8$&$4$&$2$&$1$&$1$&\;&\;&$$\\
\hline$X^3$&$-4$&$-1$&$ $&$-15$&$8$&$4$&$2$&$1$&$1$&\;&$$\\
\hline$X^4$&$-12$&$-4$&$-1$&$ $&$-15$&$8$&$4$&$2$&$1$&$1$&$$\\
\hline$-X^5$&$-17$&$12$&$4$&$1$&$ $&$15$&$-8$&$-4$&$-2$&$-1$&$-1$\\
\hline$SUM$&$-49$&\;&\;&\;&\;&$31$&\;&\;&\;&\;&$-1$\\
\hline \hline \hline
\end{tabular}}
\end{tiny}
\caption{ Numerical coefficients of $-r_5(X) \times q_5(X),$ columns 15-25. } 
\label{Trqnumber25}  
\end{table}
\end{center}

As can be seen from the $X^0$ row of Tables \ref{Trqnumber14}-\ref{Trqnumber25},
\begin{multline}\label{qk5}
  q_5(X) = X^{20} + \biggl( X^{19}
+2X^{18}+4X^{17}
+8X^{16} -15 X^{15} \biggr) \\
+ \biggl( -X^{13} -4 X^{12} -12 X^{11} + 17 X^{10}\biggr)
+ \biggl(  X^7 + 6X^6- 7X^5 \biggr) + \biggl(-X+1\biggr).
\end{multline}

Inspecting  the expression  
inside the summation sign of \eqref{qq},  for any $j, 0 \le j \le k-2,$
\begin{equation}\label{expression}
	X^{kj} 
	\Biggl(-P_{k-1,k-1-j} +
	\displaystyle \sum_{i=k-1-j}^{k-1} Q_{i,k-1-j} X^{k-i} \Biggr),
\end{equation}
we note certain patterns in each such block.  
More specifically, the $j$-th block (from left to right) starts with a 
$P$ term, followed by $Q$ terms, followed by 0 terms.
In \eqref{qk5}, the $j$-th block, $0 \le k \le 3,$ 
 is indicated by the use of bigger parentheses.

Formally,  for each $j, 0 \le j \le k-2,$
\begin{itemize}
 \item $M_{0,jk}=-P_{k-1,k-1-j},$ 
\item $M_{0,jk+m}=Q_{k-m,k-1-j},$ for $1 \le m \le j+1,$ 
\item $M_{0,jk+m}=0,$ for $j+2 \le m \le k-1.$
\end{itemize}

Additionally, geometrically, each successive row after the first row shifts the row above
it by one column to the right as seen in Tables \ref{TrqQP14} and \ref{TrqQP25}
(with the last row having negative signs).
 This shifting is formally 
justified by \eqref{M}. In the sequel we will simply refer
to \eqref{M} for justifications, it being understood that \eqref{M} is geometrically
clarified by these bullets.

\section{Overview of Proof}

To prove \eqref{qkn} using the matrix $M,$ we must show 
that for each column of $M,$ the sum of coefficients in rows 0 through k
equals the corresponding coefficient in $-p_k(X^k),$ found in the bottom \emph{sum} row.

That is, we must show
\begin{subequations}\label{SumMrc}
\begin{align}
\;	&	\sum_r M_{r,jk} &=  P_{k,j}, &\qquad & 0 \le j \le k, \\  
\;	& 	\sum_r M_{r,jk+m} &= 0,  & \qquad &  0 \le  j \le k-1, 1 \le m \le k-1. 
\end{align}
\end{subequations}

Thus to prove \eqref{qkn} we must prove \eqref{SumMrc}. We prove \eqref{SumMrc}
separately on the following five groups of columns. The proof of \eqref{SumMrc}
for each group of columns will occupy one of the next five sections.
\begin{enumerate}[label=\Alph*.]
\item     Columns $k^2$ and 0.
\item  Columns $jk+m, 0 \le j \le k-2, j+1 \le m \le k-1 $
\item   Columns  $jk, 1 \le j \le k-2 $ 
\item Columns $jk+m, j=k-1, 0 \le m \le k-1$
\item  Columns $jk+m,$ for 
\begin{equation}\label{jkm}
	0 \le j \le k-2, 1 \le m \le j 
\end{equation}
\end{enumerate}

For the reader's convenience, the letters denoting cases are
included in Tables \ref{TrqQP14}-\ref{TrqQP25}.
As shown, these five cases are mutually exclusive and 
exhaust all columns.
Throughout the proof of these five cases, 
presented in the next five sections,
we i) review 
Tables \ref{TrqQP14} and \ref{TrqQP25}   in order
to discover patterns, and, using \eqref{M}, ii) state the
pattern discovered for $k=5,$ for general $k.$

\section{Proof of \eqref{SumMrc} for Case A, Columns 0 and $k^2$}
By \eqref{2minus1} and the initial conditions for triangle $P$
presented in Table \ref{TP}, we have for $k \ge 1,$ 
\begin{equation*}
	-P_{k-1,k-1} = P_{k,k}.
\end{equation*}
It follows that for column $0$ the \emph{sum} row
equals the sum of entries above it, verifying \eqref{SumMrc} 
for column $X^{0}.$ For the column $X^{k^2},$ \eqref{SumMrc} reduces
to verification that $-X^k \times X^{k(k-1)} = -X^{k^2}.$

\section{Proof of \eqref{SumMrc} for Case B, columns $jk+m, 
0 \le j \le k-2, j+1 \le m \le k-1 $}

For the case $k=5,$ 
Table \ref{TrqQP14} shows that 
the three elements of block 1 at columns 5,6, and 7 of row $X^0,$
diagonally descend so that these three elements comprise the entries in columns 7,8 and 9.
Similarly, the   four elements in block 2 at columns 10,11,12,and 13 in row $X^0$
diagonally descend so that these four elements comprise the entries in columns 13 and 14.

This pattern is true for general $k.$ By \eqref{M},  
 for the $j$-th block, $1 \le j \le k-2,$
the set of column elements for columns
$jk+m, 0 \le j \le k-2, j+1 \le m \le k-1 $ is equal to the
set of entries in the 0-th row of that block. 

Hence, for $0 \le j \le k-2, j+1 \le m \le k-1,$   
\begin{equation*} 
\left.
\begin{aligned}  
	\;& \text{\; the sum of entries in the $jk+m$-th column,} & \;    \\
	 &=\sum_{r=0}^{k} M_{r,jk+m}   & \; \\
	&=-P_{k-1,k-1-j} + \sum_{r=k-1-j}^{k-1} Q_{r,k-1-j} =0, & \text{by \eqref{Q2P}(a)}.  
\end{aligned}
\right\}
\end{equation*}

Notice that by \eqref{2minus1}, \eqref{Q}, and the initial conditions, row 0 and column 0
of Table \ref{TP}, $Q_{r,c} = 0$ if $r<c.$ These 0 entries correspond to blank cells
in Tables \ref{TrqQP14}-\ref{Trqnumber25}.

\section{Proof of \eqref{SumMrc} for Case C,  columns  $jk, 1 \le j \le k-2 $ }

Table \ref{TrqQP14} shows that the three elements in the 
0-th row of  block 1 at columns 5,6, and 7 
diagonally descend to column 10, the beginning column of the  2nd block,   except that
i) the $-P$ term from 
column 5, now has a positive sign and ii) column 10
 has an additional $-P$ term in row $X^0.$ 

By \eqref{M}, for general $k,$  column $jk, 1 \le j \le k-2,$
has  i) all $Q$ terms
from the 0-th row of the $(j-1)$-st block, ii) a $P$
term from column $(j-1)k$ with a positive sign, and 
iii) the $P$ term in row 0, column $jk.$ 

Hence, 
\begin{equation*} 
\left. 
\begin{aligned} 
\; &\text{The sum of elements in column  $kj, \text{ for } 1 \le j  \le k-2,$} & \; & \;   \\
\;&=P_{k-1,k-j}+ \sum_{r=k-j}^{k-1} Q_{r,k-j} - P_{k-1,k-j-1}   \\
 \;&=2 P_{k-1,k-j} - P_{k-1,k-j-1}, & \qquad \text{by \eqref{Q2P}(a)},      \\
\;& =	P_{k,k-j}. &\; \text{by \eqref{2minus1}.} 
\end{aligned} 
\right\} 
\end{equation*}

\section{Proof of \eqref{SumMrc} for Case D, columns $jk+m, j=k-1, 0 \le m \le k-1$}
For the case $k=5$, Table \ref{Trqnumber25} motivates   proving
\eqref{SumMrc} using well-known identities about powers of 2.

For general $k,$ by \eqref{2minus1}, \eqref{Q}, and by the initial conditions
presented in Tables \ref{TP} and \ref{TQ}, a straightforward inductive argument shows that  
\begin{equation*}
	P_{n,1} = 2^n-1, \qquad	Q_{n,1} = 2^{n-1}, \qquad   n \ge 1.
\end{equation*}

By inspection of Table \ref{TrqQP25} for the case $k=5,$ and by \eqref{M},
for general $k,$ we see that column $jk+m, 0 \le m \le k-1$
has a 1 entry and elements  $Q_{1,1},\dotsc, Q_{k-1-m,1}.$ 
There are now two cases to consider according to the value of $m.$

\textbf{Case $m=0.$} If $m=0$ column $jk+m$ also has entry
$P_{k-1,1}.$  Hence, the following proves
\eqref{SumMrc} for column $jk+m, m = 0$  in the $(k-1)$-st block.   
\begin{equation*} 
	\sum_{r=0}^k M_{r,k(k-1)} =
	1 + \sum_{r=1}^{k-1} Q_{r,1} + P_{k-1,1} =
	1 + \sum_{r=1}^{k-1} 2^{r-1} + (2^{k-1}-1) =
	P_{k,1}. 
\end{equation*}

\textbf{Case $m=1.$} If $m \neq 0,$ column $jk+m$ also has entry $-Q_{k-m,1}.$
Hence, the following proves
\eqref{SumMrc} for column $jk+m, m \neq 0$  in the $(k-1)$-st block.
\begin{equation*} 
	\sum_{\substack{r=0\\m \neq 0}}^k M_{r,k(k-1)+m} =
	1 + \sum_{r=1}^{k-1-m} Q_{r,1} - Q_{k-m,1} =
	1 + \sum_{r=1}^{k-1-m} 2^{r-1} - 2^{k-m-1} = 
	0.
\end{equation*}

\section{Proof of \eqref{SumMrc} for Case E, \eqref{jkm}, \
columns $jk+m,$ for  $0 \le j \le k-2, 1 \le m \le j$ }

For each column in Case E whose sum we are trying to show equal to 0,
we will first show that the column entries naturally divide into 4 groups 
(with some groups empty). The proof is then completed by  
showing that the sum of the four group-sums equals 0.

\textbf{1st group:} 
As can be motivated by Tables \ref{TrqQP14} and \ref{TrqQP25}, 
and as justified for general $k$
by \eqref{M}, 
all columns satisfying \eqref{jkm} have a $-P$ term 
which descends 
diagonally from the column in that block divisible by $k.$ 

For example,  column 6 has a $-P_{4,3}$ term which diagonally descends from column 5;
columns 11 and 12 have a $-P_{4,2}$ term which diagonally descends from column 10; and columns 
16,17,18 each have a $-P_{4,1}$ term which diagonally descends from column 15.

Thus, for general $k,$ 
this first group of the elements in a column satisfying \eqref{jkm}, is
defined as the singleton $P$ term that diagonally descends from
the column of that block divisible by $k.$  
\begin{equation}\label{case1}
	\text{ Sum of the singleton element in 1st Group} = -P_{k-1,k-1-j}.
\end{equation}

\textbf{2nd group:} 
In the $X^5$ row of Tables \ref{TrqQP14} and \ref{TrqQP25}, 
any column satisfying \eqref{jkm} has  
a negative $Q$ terms that descends from  
column $(j-1)k$ in the $(j-1)$-st block. When $j=0,$ this group is vacuously
empty; for $j=1,$ column 6 has an entry $-Q_{4,4},$ 
which diagonally descends from column 5; 
for $j=2,$ columns 11 and 12 have entries of $-Q_{4,3},  -Q_{3,3}$ respectively,
which diagonally descend from column 10; 
and for $j=3,$ columns 16, 17, 18 have entries $-Q_{4,2}, -Q_{3,2}, -Q_{2,2}$ respectively, 
which diagonally descend from column 15.

By \eqref{M}, for general $k,$ this second group of elements in a column
satisfying \eqref{jkm}, is defined as the negative $Q$ term that diagonally
descends from column $(j-1)k$ in the $(j-1)$-st block. By \eqref{Q}, 
for each $m$ and $j$ satisfying \eqref{jkm},
 the sum of the singleton element in this group is the following.
\begin{equation}\label{case2}
	- Q_{k-m,k-j} = - \biggl(P_{k-m,k-j} - P_{k-m-1,k-j}  \biggr).
\end{equation}

\textbf{3rd group:}  For each $m$ satisfying \eqref{jkm},
we define this third group, as
the $Q$ entries in row 0 of the $j$-th block
diagonally descending to column $jk+m$ (this includes the $j$-th block entry
already in that column in row 0).
 
For example, Tables \ref{TrqQP14} and \ref{TrqQP25} show 
that column 6 has a singleton $Q_{4,3};$ column
11 has a singleton $Q_{4,2},$ and column 12 has $Q_{3,2}, Q_{4,2};$
column 16 has a singleton $Q_{4,1},$ column 17 has $Q_{3,1}, Q_{4,1}$
and column 18 has $Q_{2,1}, Q_{3,1}, Q_{4,1}.$ 
 
Using \eqref{M}, for general $k,$  
the set of $Q$ entries in row 0 of the $j$-th block
diagonally descending to column $jk+m, 0 \le m \le j$ 
equals $\{Q_{k-m,k-j-1},\dotsc, Q_{k-1,k-j-1}\}.$
By \eqref{Q2P}(b), the  sum of  elements in this group is as follows:

\begin{equation}\label{case3}
	\displaystyle \sum_{r=k-m}^{k-1} Q_{r,k-j-1} = P_{k-1,k-j-1} -P_{k-m-1,k-j-1}.
\end{equation}

\textbf{4th group:} For each fixed $j,k,m$ satisfying \eqref{jkm}, we define
this fourth group to consist of the  $Q$ entries diagonally descending
from row 0 in the $j-1$-st block.

This last group requires more care to address 
 since for some columns the group is empty.

For example,  Column 16 in Table \ref{TrqQP25} has $Q_{2,2}$ and $Q_{3,2}$ while 
column 17 has singleton $Q_{2,2}.$ Column 18 has no
entries for this group; but we can alternatively describe this empty group
as all $Q$s between $Q_{2,2}$ and $Q_{1,2}$
which would result in no $Q$s since there are no integers between 2 and 1.

For general $k,$ by\eqref{M}, this group consists of the 
following (possibly) empty set of elements: $\{Q_{k-j,k-j}, \dotsc, Q_{k-1-m,k-j}\}$
where we follow the usual convention that the set is empty if 
$k-1-m < k-j.$ 

Hence, by \eqref{Q2P}(a),
the sum of the entries in  this fourth group is the following.
\begin{equation}\label{case4}
	\displaystyle \sum_{r=k-j}^{k-1-m} Q_{r,k-j} = P_{k-m-1,k-j}.
\end{equation}
 
It is straightforward to check for Tables \ref{TrqQP14} and 
\ref{TrqQP25} that the sum is vacuous for 
$k=5$ at columns 6 and 18 and that the $P$ value on the right hand side of 
\eqref{case4} is also 0 as required.

To complete the proof for this case (E), 
we note that for any column $jk+m,$ satisfying \eqref{jkm}, 
i) the four groups enumerated above are mutually exclusive,
ii) each entry in column $jk+m$ is in some group, and 
iii) by \eqref{2minus1} the sum of
the right-hand sides of \eqref{case1}-\eqref{case4} is 0.

This completes the proof for case E. 

\section{Completion of Proof of Theorem 1.3}

To see that the proof of Theorem 1.3 is complete we again review the flow of logic.
\begin{itemize}
\item The last five sections have proven \eqref{SumMrc}, which for Tables 
\ref{TrqQP14} and \ref{TrqQP25} state that the \emph{sum} row of $M$ contains
the sum of all column entries above it, with the column entries being obtained by
the multiplication of $q_5(X),$ the top row, by $-r_5(X)$ the left-most column. 
\item Equation \eqref{SumMrc} in turn proves \eqref{qkn},
$-p_k(X^k) = -r_k(X) \times q_k(X).$  The long-hand multiplication
is illustrated for the case $k=5$ in Tables \ref{TrqQP14}-\ref{TrqQP25}. 
\item Equation \eqref{qkn} is equivalent to \eqref{qk}, $p_k(X^k) =  r_k(X)q_k(X)$
\item Equation \eqref{qk} directly implies that $r_k(X),$ the characteristic polynomial
for $\{F^{(k)}_{n}\}_{\text{all $n$}},$
divides   $p_k(X^k),$ the characteristic polynomial for 
$\{F^{(k)}_{kn}\}_{\text{all $n$}}.$
\item Hence by either Table \ref{Tpolynomial} or Proposition 1.4,
which creates a \emph{dictionary} between polynomial identities and recursions,
$F_n = \sum_{i=1}^k P_{k,i} F_{n-ki},$ the identity associated with the polynomial
$p_k(X^k),$  is satisfied by the sequence $\{F^{(k)}_{n}\}_{\text{all $n$}}.$
\item This last bullet is the statement of Theorem 1.3, which is therefore proven.
\end{itemize} 
 \section{Conclusion}

In this paper, we have proven a family of identities for the family of
recursive sequences $F^{(k)}, k=2,3,\dotsc$
restricted to a fix modulus modulo $k.$ This paper has provided a
 proof-method tool which   exploits the divisibility
properties of characteristic polynomials and was facilitated by a matrix $M$
showing the long-hand multiplication.  We believe this tool may prove
useful for proving families of identities in other infinite families of recursions.
A typical application of the methods of this paper could involve starting with
some second order recursion such as the Pell, Pell-Lucas, or Jacobstahl sequence,
generalizing this sequence to a family of recursive sequences similar to the
$F^{(k)},k=2,3,\dotsc,$ and uniformly proving identities satisfied by   the subsequences
whose indices are divisible by $k.$

\medskip

\noindent MSC2020: 11B39 

\end{document}